\newtheorem{thm}{Theorem}[section]
\newtheorem{cor}[thm]{Corollary}
\newtheorem{defn}[thm]{Definition}
\newtheorem{lem}[thm]{Lemma}
\newtheorem{prop}[thm]{Proposition}
\newtheorem{exa}[thm]{Example}
\newtheorem{rem}[thm]{Remark}
\title{Growth theorems in slice analysis of several variables}
\author{Guangbin Ren, Ting  Yang}
\email[G.~Ren]{rengb@ustc.edu.cn}
\email[T.~Yang]{@mail.ustc.edu.cn}
\date{\today}
\thanks{The first author is supported by the NNSF  of China (11771412).}
\keywords{Clifford algebras; several complex variables; slice regular mappings; growth theorem.}
\subjclass[2010]{Primary 32A22; Secondary 32A10, 32H02.}
\begin{document}
\maketitle
\markboth{Growth theorems in slice analysis of  several variables}
{Growth theorems in slice analysis of  several variables}
\renewcommand{\sectionmark}[1]{}

\begin{abstract}
In this paper, we define a class of slice mappings of several Clifford variables, and the corresponding slice regular mappings. Furthermore,
we establish the growth theorem for slice regular starlike or convex mappings on the unit ball  of several slice  Clifford variables, as well as on the bounded slice domain which is slice starlike and slice circular.
\end{abstract}

\section{Introduction}

Ghiloni and Perotti \cite{Ghiloni2011002}  initiated the study of slice analysis   on the quadratic cones of real alternative algebras;  see \cite{Ghiloni2017003},\cite{Wang2016002} for its recent development.
The theory has some distinguished models on quaternions, real Clifford algebras, and Octonions; see
\cite{Gentili2007001},\cite{Gentili2008002},\cite{Ghiloni2011002},\cite{Ghiloni2017002}, \cite{Colombo2009002},\cite{Colombo2009003},\cite{Colombo2010003},\cite{Gentili2010002}  for the pioneering works.
 It has important applications in the functional calculus for non-commutative operators \cite{Colombo2011006}.

Recently, the slice   theory of  several variables has also been studied \cite{Adams1999001},\cite{Colombo2012003},\cite{Ghiloni2012002},\cite{Wang2014001}.
In this article, we shall provide a new generalization, which enable us to extend slice analysis to higher dimensions as well as to the setting of several slice Clifford variables.

Based on a new convex combination identity in \cite{Wang2017001}, the sharp growth theorems for slice monogenic extensions of univalent functions on the unit disc $D\subset \mathbb{C}$ in the setting of Clifford algebras was established as follows:

Let $f$ be a slice monogenic function on the unit ball $\mathbb{B}$ in the regular quadratic cone $\mathbb{R}^{(m+1)}$  (in this paper, $\mathbb{R}^{(m+1)}$ denotes $\mathbb{R}^{m+1}$  in \cite{Wang2017001}) of $\mathbb{R}_m$ such that its restriction $f_I$ to $\mathbb{B}_I$ is injective and such that $f(\mathbb{B}_I)\subseteq \mathbb{C}_I$ for some $I\in \mathbb{S}_m.$ If $f(0)=0, \ f'(0)=1,$  then
$$\frac{|x|}{(1+|x|)^2}\leq |f(x)|\leq \frac{|x|}{(1-|x|)^2},\qquad x\in \mathbb{B}.$$
Moreover, equality holds for one of these two inequalities at some point $x_0\in \mathbb{B}\setminus\{0\}$ if and only if $f$ is of the form
$f(x)=x(1-xe^{I\theta})^{-*2},\ \forall x\in \mathbb{B},$ for some $\theta \in \mathbb{R}.$ (See \cite{Wang2017001} for details.)

From the classical geometric function theory in higher dimensions,  it is known that the growth theorems fail for the full class of normalized univalent mappings \cite{Graham2003001}. It is Cartan who suggested to consider the subclass of starlike or convex mapping instead.

The aim of this paper is to generalize the growth results for subclasses of normalized univalent mappings on the unit ball in $\mathbb{C}^n$ to the subset of the several slice Clifford variables $(\mathcal{Q}^{m})_s^n.$

Since the convex combination identity mentioned above was only proved for one-dimensional case, we overcome this problem by using a new approach. As an application, we obtain the growth theorems for slice regular starlike or convex mappings on the unit ball of several slice  Clifford variables, as well as on a more general slice domain which is slice starlike and slice circular.


\section{Priliminaries}
The real Clifford algebra $\mathbb{R}_m$ is an universal associative algebra over $\mathbb{R}$ generated by $m$ basis elements $e_1,\cdots,e_m,$ subject to the relations
$$e_\imath e_\jmath+e_\jmath e_\imath=-2\delta_{\imath \jmath},\qquad \imath,\jmath=1,\cdots,m.$$
As a real vector space, $\mathbb{R}_m$ has dimension $2^m.$ Each element $x$ in $\mathbb{R}_m$ can be expressed as
$$x=\sum_{A\in \mathcal{P}(m)}x_Ae_A,$$
where
\begin{equation*}
\begin{split}
\mathcal{P}(m)&��=\{(h_1,\cdots, h_r)\in \mathbb{N}^r \ | \ r=1,\cdots,m,\ 1\leq h_1< \cdots < h_r\leq m\}.
\end{split}
\end{equation*}

For each $A=\{(h_1,\cdots,h_r)\in \mathcal{P}(m)\}$,
the coefficients $x_A\in \mathbb{R}$, and the products $e_A:=e_{h_1}e_{h_2}\cdots e_{h_r}$ are the basis elements of the Clifford algebra $\mathbb{R}_m.$  The unit of the Clifford algebra corresponds to $A=\varnothing$, and we set $e_\phi=1$.  As usual, we identify the real numbers field $\mathbb{R}$ with the subalgebra of $\mathbb{R}_m$ generated by the unit.

Let $x^c$ be the Clifford conjugate of $x\in \mathbb{R}_m.$ We define
 $$t(x)=x+x^c, \qquad n(x)=x x^c $$
to  be the trace and  the (squared) norm of a Clifford element $x$, respectively.

The quadratic cone of the Clifford algebra $\mathbb{R}_m$ \cite{Ghiloni2012002} is defined by
$$\mathcal{Q}_m:=\mathbb{R}\cup \{x\in \mathbb{R}_m \ | \ t(x)\in \mathbb{R},\ n(x)\in \mathbb{R},\ 4n(x)>t(x)^2\},$$
and the space of paravectors is defined by
$$\mathbb{R}^{(m+1)}:=\{x\in \mathbb{R}_m \ | \ [x]_k=0\ \text{for every}\  k>1\},$$
where $[x]_k$ denotes the k-vector pair of $x.$

We also set
$$\mathbb{S}_m:=\{J\in \mathcal{Q}_m\ |\ J^2=-1\}.$$
The elements of $\mathbb{S}_m$ are called the square roots of $-1$ in the Clilfford algebra $\mathbb{R}_m.$
It is known that
$$\mathbb{S}_m=\{x\in \mathbb{R}_m \ | \ t(x)=0, \ n(x)=1 \}.$$

We consider the cartesian product $\big(\mathbb{R}_m\big)^n$ of the Clifford algebra $\mathbb{R}_m.$
 Its complexification  is denoted by
$$\big(\mathbb{R}_m\big)^n_{\mathbb C}
=\big(\mathbb{R}_m\big)^n\otimes_{\mathbb{R}}{\mathbb{C}}:=\big(\mathbb{R}_m\big)^n+i\big(\mathbb{R}_m\big)^n.
$$

For each $x,y\in  \mathbb{R}^n,$
we define $$\overline{x+iy}=x-iy$$ be the complex conjugation of $x+iy$ in $\mathbb{R}^n_{\mathbb{C}}.$
Note that $\mathbb{R}_0=\mathbb{R}$ and $(\mathbb{R}_0)^n_{\mathbb{C}}=\mathbb{C}^n$,
then $\overline{x+iy}=x-iy,$ for each  $x,y\in \mathbb{R}^n.$

\begin{defn}\label{Defn-A}
	Let $D\subset\mathbb{C}^n$ be an open subset. A mapping $F:D\rightarrow \big(\mathbb{R}_m\big)^n_{\mathbb{C}}$ is called a $\mathbb{R}_m$-stem mapping over the domain $D$ if $F$ is complex intrinsic, i.e.  $$F(\overline{z})=\overline{F(z)}, \qquad \forall\ z\in D.$$
\end{defn}

\medskip

\begin{rem}\label{Rm-A}\
$(1)$\  A mapping $F$ is a $\mathbb{R}_m$-stem mapping if and only if the $\mathbb{R}_m$-valued components $F_1,F_2$ of the $F=F_1+iF_2$ form an even-odd pair, i.e.
$$F_1(\bar{z})=F_1(z), \qquad F_2(\bar{z})=-F_2(z)$$  for each $z\in D.$

$(2)$Consider $\mathbb{R}_m$ as a $2^m$-dimensional real vector space. By means of a basis $\mathcal{B}=\{e_A\}_{A\in \mathcal{P}_m}$ of $\mathbb{R}_m,$ $F$ can be identity with complex intrinsic curves in $\mathbb{C}^n.$

Let $F(z)=F_1(z)+iF_2(z)=\sum_{A\in \mathcal{P}_m} F^Ae_A$ with $F^A(z)\in \mathbb{C}^n.$ Then
$$\tilde F=\left(
\begin{array}{cccc}
F_1^0 & F_1^1  &\cdots & F_1^{2^m-1} \\
F_2^0 & F_2^1  &\cdots & F_2^{2^m-1} \\
\vdots& \vdots &\ddots & \vdots      \\
F_n^0 & F_n^1  &\cdots & F_n^{2^m-1} \\
\end{array}
\right):D\rightarrow \mathbb{C}^{n \times {2^m}}$$
satisfies $\tilde F(\bar z)= \overline{\tilde  F(z)}.$ Giving $\mathbb{R}_m$ the unique manifold structure as a real vector space, we get that a stem mapping $F$ is of class $C^k(k=0,\cdots,\infty)$ or real-analytic if and only if the same property for $\tilde F.$ This notion is clearly independent of the choice of the basis of $\mathbb{R}_m.$

$(3)$ In the case of $n=1$, Definition \ref{Defn-A} was first introduced   by Ghiloni and Petrotti \cite{Ghiloni2011002}.
\end{rem}

In the several Clifford numbers $(\mathbb R_m)^n$, we define
$$\big(\mathcal{Q}_m\big)^n_s  :=\bigcup_{\,I \in\mathbb{S}_m} \mathbb{C}_I^n,$$
where $$\mathbb C_I^n
:=\mathbb R^n +I \mathbb R^n, \qquad I\in \mathbb{S}_m.$$
Let  $\big(\mathbb{R}^{(m+1)}\big)^n$ be  the space of  multi-paravectors. Its subspace
$$\big(\mathbb{R}^{(m+1)}\big)^n_s:= \big(\mathbb{R}^{(m+1)}\big)^n \bigcap \big(\mathcal{Q}_m\big)^n_s$$
is called the space of  slice multi-paravectors.

For any $\alpha, \beta\in\mathbb R^n,\ I\in \mathbb S_m$ with $z=\alpha+\beta I$, we set
$$[z] :=\bigcup_{J\in\mathbb S_m} \alpha+\beta J    \subseteq \big(\mathcal{Q}_m\big)^n_s.$$

Given an open subset $D$ of $\mathbb{C}^n,$ let $\Omega_D$ be the subset of $\big(\mathcal{Q}_m \big)^n_s$ obtained by the action on $D$ of the square roots of $-1$:
$$\Omega_D:=\{\alpha+\beta J\in \big(\mathcal{Q}_m \big)^n_s \ |\  \alpha +i \beta \in D, \alpha ,\beta \in \mathbb{R}^n, \ J\in \mathbb{S}_m\},$$
and
$$D_I:=\Omega_D\cup \mathbb C_I^n,\qquad I\in \mathbb{S}_m,$$
then
$$\Omega_D=\bigcup_{I\in\mathbb S_m} D_I,\qquad I\in \mathbb{S}_m.$$
In particular, $\Omega_D=(\mathcal Q_m)^n_s$ when  $D=\mathbb C^n$.

\begin{defn}\label{DefA}
	Any stem mapping $F:D\rightarrow \big(\mathbb{R}_m\big)^n_{\mathbb{C}}$ induces a (left) slice mapping $$f=\mathcal{I}(F):\Omega_D \rightarrow \big(\mathbb{R}_m\big)^n.$$ If $x=\alpha+I\beta \in D_I,\ \forall I\in \mathbb{S}_m,$ we set $$f(x):=F_1 (z)+IF_2 (z) \qquad  (z=\alpha + i \beta ).$$
\end{defn}

The slice function $f$ is well defined, since $(F_1,F_2)$ is an even-odd pair w.r.t. $\beta$ and then
$f(\alpha+(-\beta)(-J))=F_1(\bar z)+(-J)F_2(\bar z)=F_1(z)+JF_2(z).$ There is an analogous definition for right slice mappings when the element $J\in \mathbb{S}_m$ is replace on the right of $F_2(z).$ In what follows, the term slice mapping will always mean left slice mappings.

We will denote the set of all (left) slice mappings on $\Omega_D$ by
\begin{equation*}
\begin{split}
\mathcal{S}\big(\Omega_D,\big(\mathbb{R}_m\big)^n\big)&:=\big\{f:\Omega_D\rightarrow \big(\mathbb{R}_m\big)^n\ |\ f=\mathcal{I}(F),\\& \qquad\qquad\qquad \qquad F:D\rightarrow \big(\mathbb{R}_m\big)^n_\mathbb{C}\ \text{is\ a}\ \mathbb{R}_m\text{-stem\ mapping}\big \}.
\end{split}
\end{equation*}

The distinguished property for a slice mapping is its  representation formula:

\begin{prop}\label{prop-1}
	Let $f\in \mathcal{S} \big(\Omega_ D,\big(\mathbb{R}_m\big)^n\big)$ and $J$,$K\in \mathbb{S}_m$ with $J\not=K$. Then
	$$f(\alpha+\beta I )=(I-K)((J-K)^{-1}f(\alpha+\beta J))-(I-J)((J-K)^{-1}f(\alpha+\beta K))$$
for each $ I  \in \mathbb{S}_m,\ \alpha, \beta \in \mathbb{R}^n$ with $\alpha+I \beta \in D_I.$
\end{prop}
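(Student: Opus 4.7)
The plan is to reduce the identity to pure algebra in $\mathbb{R}_m$ by invoking the definition of $f$ through its stem mapping. Write $f = \mathcal{I}(F)$ with $F = F_1 + iF_2$ an $\mathbb{R}_m$-stem mapping over $D$, and set $z := \alpha + i\beta$. Since $\alpha + \beta I \in D_I$ forces $z \in D$ (by the definitions of $\Omega_D$ and $D_I$), the points $\alpha + \beta J$ and $\alpha + \beta K$ also lie in $\Omega_D$, so all three values of $f$ in the claim are defined. By Definition \ref{DefA}, for each $L \in \{I, J, K\}$,
$$f(\alpha + \beta L) = F_1(z) + L\, F_2(z).$$

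Substituting these expressions into the right-hand side, the claim reduces to verifying two algebraic identities in $\mathbb{R}_m$, with $F_1(z)$ and $F_2(z)$ treated as arbitrary right factors: that the coefficient of $F_1(z)$ is $1$ and the coefficient of $F_2(z)$ is $I$. For the first,
$$(I-K)(J-K)^{-1} - (I-J)(J-K)^{-1} = \bigl[(I-K) - (I-J)\bigr](J-K)^{-1} = (J-K)(J-K)^{-1} = 1.$$
For the second, abbreviate $a := (I-K)(J-K)^{-1}$, so the computation just made gives $-(I-J)(J-K)^{-1} = 1 - a$. Then
$$(I-K)(J-K)^{-1}J - (I-J)(J-K)^{-1}K = aJ + (1-a)K = a(J-K) + K = (I-K) + K = I,$$
using only distributivity and associativity, together with $a(J-K) = (I-K)(J-K)^{-1}(J-K) = I-K$.

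The only point calling for care, rather than a genuine obstacle, is the non-commutativity of $\mathbb{R}_m$: every manipulation must respect the order of factors, and the $(\mathbb{R}_m)^n$-valued $F_2(z)$ must remain on the right of every scalar coefficient throughout. The one non-algebraic input is the existence of $(J-K)^{-1}$ in $\mathbb{R}_m$ when $J \neq K$ in $\mathbb{S}_m$, which is a standard fact in slice Clifford analysis ensured by the hypothesis on the two square roots of $-1$ (cf.\ \cite{Ghiloni2012002}). Given this, the argument is purely formal and uses no analytic information about $F_1, F_2$ beyond the very definition of a slice mapping.
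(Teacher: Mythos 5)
Your proof is correct and takes essentially the same approach as the paper: both reduce the claim to pure Clifford algebra via $f(\alpha+\beta L)=F_1(z)+LF_2(z)$, the only difference being that the paper first solves for $F_1,F_2$ from the values at the $J$- and $K$-slices and then reconstructs $f(\alpha+\beta I)$, whereas you substitute the stem expressions into all three values and verify the coefficient identity directly. The algebraic content is identical, and you correctly keep every scalar coefficient on the left of $F_2$ to respect non-commutativity.
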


\begin{proof}
	For $f\in \mathcal{S} \big(\Omega_ D,\big(\mathbb{R}_m\big)^n\big)$, by definition,
	$$ f(\alpha +\beta J)-f(\alpha-\beta K)=(J-K)F_2(\alpha+\beta i)$$
	Hence $$F_2(\alpha+\beta i)=(J-K)^{-1}(f(\alpha +\beta J)-f(\alpha+\beta K))$$
and \ 	\begin {equation*}
	\begin {split}
F_1(\alpha+\beta i)=&f(\alpha+\beta J)-JF_2(\alpha+\beta i)\\=&f(\alpha+\beta J)-J((J-K)^{-1}(f(\alpha +\beta J)-f(\alpha+\beta K))).
\end{split}
\end{equation*}
Therefore,
\begin {equation*}
\begin {split}
f(\alpha+\beta I )=&F_1(\alpha+\beta i)+IF_2(\alpha+\beta i)
\\=&f(\alpha+\beta J)+(I-J)((J-K)^{-1}(f(\alpha +\beta J)-f(\alpha+\beta K)))\\=&(J-K+I-J)((J-K)^{-1}f(\alpha +\beta J))+(I-J)((J-K)^{-1} f(\alpha+\beta K))\\=&(I-K)((J-K)^{-1}f(\alpha +\beta J))+(I-J)((J-K)^{-1} f(\alpha+\beta K)).
\end{split}
\end{equation*}
\end{proof}

By settting $K=-J$ in Proposition \ref{prop-1}, we obtain the following corollary.

\begin{cor}\label{cor-1}
	Let  $f\in \mathcal{S} \big(\Omega_ D,\big(\mathbb{R}_m\big)^n\big)$ and $ I \in \mathbb{S}_m,\ \alpha, \beta \in \mathbb{R}^n$ with $\alpha+I \beta \in \Omega_D.$   Then
	$$f(\alpha+\beta I )= \frac{1}{2}(f(\alpha+\beta J)+f(\alpha-\beta J))-\frac{I}{2}(J(f(\alpha+\beta J)-f(\alpha-\beta J))) $$
 for each $J\in\mathbb S_m.$
\end{cor}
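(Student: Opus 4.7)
The plan is to directly specialize Proposition \ref{prop-1} with $K = -J$ and simplify. Since $J \in \mathbb{S}_m$ satisfies $J^2 = -1$, the element $J$ is invertible in $\mathbb{R}_m$ with $J^{-1} = -J$, and hence $(J-K)^{-1} = (2J)^{-1} = -\tfrac{1}{2}J$. The substitution also gives $I - K = I + J$, while $I - J$ is unchanged. Plugging all of this into the representation formula of Proposition \ref{prop-1} reduces the corollary to a short algebraic manipulation inside the associative (but noncommutative) Clifford algebra.

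Concretely, I would expand the two resulting terms using only associativity. For the first, compute $(I+J)J = IJ + J^{2} = IJ - 1$, so that
$(I+J)\bigl((2J)^{-1} f(\alpha+\beta J)\bigr) = -\tfrac{1}{2}(IJ - 1)\,f(\alpha+\beta J) = \tfrac{1}{2} f(\alpha+\beta J) - \tfrac{1}{2} IJ\, f(\alpha+\beta J)$.
For the second, $(I-J)J = IJ - J^{2} = IJ + 1$, hence
$-(I-J)\bigl((2J)^{-1} f(\alpha-\beta J)\bigr) = \tfrac{1}{2}(IJ + 1)\,f(\alpha-\beta J) = \tfrac{1}{2} f(\alpha-\beta J) + \tfrac{1}{2} IJ\, f(\alpha-\beta J)$.
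Adding the two and rewriting $\tfrac{1}{2} IJ\bigl(f(\alpha+\beta J) - f(\alpha-\beta J)\bigr) = \tfrac{I}{2}\bigl(J(f(\alpha+\beta J) - f(\alpha-\beta J))\bigr)$ by associativity yields exactly the claimed identity.

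There is no serious obstacle; the only point that truly needs care is keeping track of the order of multiplication, since $I$, $J$, and $IJ$ do not commute in general and the $f$-values live in $(\mathbb{R}_m)^n$ on which these scalars act by left multiplication. I expect the whole derivation to occupy just a couple of lines once the substitution $K=-J$ and the identity $J^{-1}=-J$ are made explicit.
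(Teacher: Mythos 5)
Your proof is correct and follows exactly the route the paper intends: the paper itself simply remarks that the corollary follows ``by setting $K=-J$ in Proposition \ref{prop-1}'' without writing out the algebra, while you carry that simplification through explicitly (using $J^{-1}=-J$, expanding $(I\pm J)J$, and regrouping), which is precisely what that remark leaves implicit.
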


We will denote by
$$\mathcal{S}^1\big(\Omega_D,\big(\mathbb{R}_m\big)^n\big):=\{f= \mathcal{I}(F) \in\mathcal{S }\big(\Omega_D,\big(\mathbb{R}_m\big)^n\big)|\ F\in C^{1}\big(D,\big(\mathbb{R}_m\big)^n_{\mathbb{C}}\big) \}$$
the real vector space of slice mappings of several Clifford variables with stem mapping of class $C^1.$

Let $f=\mathcal{I}(F)\in\mathcal{S}^1\big(\Omega_D,\big(\mathbb{R}_m\big)^n\big)$ and $z=\alpha+i\beta \in D.$ Then the partial derivatives $\partial F/\partial \alpha_t$ and $i\partial F/\partial \beta_t$ are continous $\mathbb{R}_m-$stem mappings on $D,$ for $t=1,2,\cdots,n.$ The same property holds for their linear combinations
$$\frac{\partial F}{\partial z_t}=\frac{1}{2}(\frac{\partial  F}{\partial \alpha_t}-i\frac{\partial F}{\partial \beta_t}) \qquad \text and \qquad \frac{\partial F}{\partial \overline z_t}=\frac{1}{2}(\frac{\partial F}{\partial \alpha_t}+i\frac{\partial F}{\partial \beta_t}),$$
where $z=(z_1,\cdots,z_n),\ \alpha=(\alpha_1,\cdots,\alpha_n),\ \beta=(\beta_1,\cdots,\beta_n),\ t=1,2,\cdots,n.$

\begin{defn}\label{rm-2}
	Let $f= \mathcal{I}(F) \in\mathcal{S }^1\big(\Omega_D,\big(\mathbb{R}_m\big)^n\big).$ We set
	$$\frac{\partial f}{\partial x}:= \mathcal{I}(\frac {\partial F}{\partial z}),\qquad \text and \qquad \frac{\partial f}{\partial \bar x}:= \mathcal{I}(\frac {\partial F}{\partial \bar z}),$$
i.e.
$$\big(\frac{\partial f}{\partial x_1},\cdots,\frac{\partial f}{\partial x_n}\big)=\big(\mathcal{I}(\frac {\partial F}{\partial z_1}),\cdots,\mathcal{I}(\frac {\partial F}{\partial z_n})\big),$$
and
$$\big(\frac{\partial f}{\partial \bar x_1},\cdots,\frac{\partial f}{\partial \bar x_n}\big)=\big(\mathcal{I}(\frac {\partial F}{\partial \bar z_1}),\cdots,\mathcal{I}(\frac {\partial F}{\partial \bar z_n} )\big),$$
where
$$\mathcal{I}(\frac{\partial F}{\partial z_t})=\big( \mathcal{I}(\frac {\partial F^1}{\partial z_t}),\ \mathcal{I}(\frac {\partial F^2}{\partial z_t}), \ \cdots,\ \mathcal{I}(\frac {\partial F^n}{\partial z_t})\big)^T,$$
and
$$\mathcal{I}(\frac {\partial F}{\partial \bar z_t})=\big( \mathcal{I}(\frac {\partial F^1}{\partial \bar z_t}),\ \mathcal{I}(\frac {\partial F^2}{\partial \bar z_t}),\ \cdots,\ \mathcal{I}(\frac {\partial F^n}{\partial \bar z_t})\big)^T,$$
for $F=(F^1,F^2,\cdots,F^n),\  t=1,2,\cdots,n, $ and $A^T$ stands for  the transpose of the vector $A.$
\end{defn}
The notation $\partial f/ \partial \bar x_t$ is justified by the following properties: $$\bar x =(\mathcal{I}(\bar z_1), \cdots,\mathcal{I}(\bar z_n))^T$$ and therefore $$\partial \bar x/\partial \bar x_t=(0,\cdots,0,1,0,\cdots,0)^T,\qquad \partial x/ \partial \bar x_t=0.$$

Left multiplication by $i$ defines a complex structure on $\big(\mathbb{R}_m\big)^n_{\mathbb{C}}.$ With respect to this structure, a $C^1$ mapping $F=F_1+iF_2:D\rightarrow \big(\mathbb{R}_m\big)^n_{\mathbb{C}}$ is holomorphic if and only if its components $F_1,$ $F_2$ satisfy the Cauchy-Riemann equations:
$$\frac{\partial  F_1}{\partial \alpha}=\frac{\partial  F_2}{\partial \beta},\qquad \frac{\partial  F_1}{\partial \beta}=-\frac{\partial  F_2}{\partial \alpha},$$
i.e.
$$\frac{\partial  F}{\partial \bar z}=0,\qquad (z=\alpha+i \beta \in D),$$
where
$$\frac{\partial F_p}{\partial \alpha}=\left(
\begin{array}{cccc}
\frac{\partial F_p^1}{\partial  \alpha_1} & \frac{\partial F_p^1}{\partial  \alpha_2} &\cdots & \frac{\partial F_p^1}{\partial  \alpha_n} \\
\frac{\partial F_p^2}{\partial  \alpha_1} & \frac{\partial F_p^2}{\partial  \alpha_2} &\cdots & \frac{\partial F_p^2}{\partial  \alpha_n}\\
\vdots& \vdots &\ddots & \vdots      \\
\frac{\partial F_p^n}{\partial  \alpha_1} & \frac{\partial F_p^n}{\partial  \alpha_2} &\cdots & \frac{\partial F_p^n}{\partial  \alpha_n} \\
\end{array}
\right),$$
and
$$\frac{\partial F_p}{\partial \beta}=\left(
\begin{array}{cccc}
\frac{\partial F_p^1}{\partial  \beta_1} & \frac{\partial F_p^1}{\partial \beta_2} &\cdots & \frac{\partial F_p^1}{\partial  \beta_n} \\
\frac{\partial F_p^2}{\partial  \beta_1} & \frac{\partial F_p^2}{\partial  \beta_2} &\cdots & \frac{\partial F_p^2}{\partial  \beta_n}\\
\vdots& \vdots &\ddots & \vdots      \\
\frac{\partial F_p^n}{\partial  \beta_1} & \frac{\partial F_p^n}{\partial  \beta_2} &\cdots & \frac{\partial F_p^n}{\partial  \beta_n} \\
\end{array}
\right),$$
for $F_p=(F_p^1,\cdots,F_p^n),\ p=1,2,$ and $\alpha=(\alpha_1,\cdots,\alpha_n),\ \beta=(\beta_1,\cdots,\beta_n).$

This condition is equivalent to require that, for any basis $\mathcal{B},$ the mapping $\tilde{F}$\ (cf. Remark \ref{Rm-A}) is holomorphic.

Set
$$\frac{\partial f}{\partial x}:=\left(
\begin{array}{cccc}
\frac{\partial f_1}{\partial x_1} & \frac{\partial f_1}{\partial x_2} &\cdots & \frac{\partial f_1}{\partial x_n} \\
\frac{\partial f_2}{\partial x_1} & \frac{\partial f_2}{\partial x_2} &\cdots & \frac{\partial f_2}{\partial x_n}\\
\vdots& \vdots &\ddots & \vdots      \\
\frac{\partial f_n}{\partial x_1} & \frac{\partial f_n}{\partial x_2} &\cdots & \frac{\partial f_n}{\partial x_n} \\
\end{array}
\right),
\qquad
\frac{\partial F}{\partial z}:=\left(
\begin{array}{cccc}
\frac{\partial F^1}{\partial z_1} & \frac{\partial F^1}{\partial z_2} &\cdots & \frac{\partial F^1}{\partial z_n} \\
\frac{\partial F^2}{\partial z_1} & \frac{\partial F^2}{\partial z_2} &\cdots & \frac{\partial F^2}{\partial z_n}\\
\vdots& \vdots &\ddots & \vdots      \\
\frac{\partial F^n}{\partial z_1} & \frac{\partial F^n}{\partial z_2} &\cdots & \frac{\partial F^n}{\partial z_n} \\
\end{array}
\right).$$
Similarly define $\frac{\partial f}{\partial \bar x}$ and $\frac{\partial F}{\partial \bar z}.$
Let
$$\mathcal{H}(D):=\big\{F\in C^1\big(D,\big(\mathbb{R}_m\big)^n_{\mathbb{C}}\big):\frac{\partial F}{\partial \bar z}=0, \ \forall z=(z_1,\cdots, z_n)\in D\big\}.$$

\begin{defn}
	A (left) slice mapping $f= \mathcal{I}(F) \in\mathcal{S }^1\big(\Omega_D,\big(\mathbb{R}_m\big)^n\big)$ is (left) slice regular if its stem mapping $F$ is holomorphic.
\end{defn}

We denote the vector space of slice regular functions on $\Omega_D$ by $\mathcal{SR}\big(\Omega_D,\big(\mathbb{R}_m\big)^n\big)$. That is,
		$$\mathcal{SR}\big(\Omega_D,\big(\mathbb{R}_m\big)^n\big):=\big\{\mathcal{I}(F)\in\mathcal{S}^1\big(\Omega_D,\big(\mathbb{R}_m\big)^n\big)\ |\  F:D\rightarrow \big(\mathbb{R}_m\big)^n_{\mathbb{C}}  \ \text {is\ holomorphic}\big\}.$$

\begin{rem} \label{rem-S2}
If we define
$$\mathbb{S}_m^*:=\{J\in \mathbb{R}^{(m+1)} \  | \ J^2=1\},$$
and
$$\Omega_D^*:=\{x=\alpha+\beta J\in \big(\mathbb{R}^{(m+1)} \big)^n_s \ |\  \alpha +i \beta \in D, \alpha ,\beta \in \mathbb{R}^n, \ J\in \mathbb{S}_m^*\},$$
then
$\Omega_D^*\subseteq \Omega_D$ and
$f\in \mathcal{SR}\big(\Omega_D^*,\big(\mathbb{R}_m\big)^n\big)$ is called the slice monogenic mapping.
\end{rem}

\begin{rem}
In the case of  $m=2$, we call $f\in \mathcal{SR}(\Omega_D,\mathbb{H}^n)$ a slice regular mapping of several quaternionic variables.
\end{rem}

\begin{prop}\label{prop:regular-derivative}
Let	$f= \mathcal{I}(F) \in\mathcal{S }^1\big(\Omega_D,\big(\mathbb{R}_m\big)^n\big).$ Then $f$ is slice regular on  $\Omega_D$ if and only if the restriction
$$f_I:D_I\rightarrow \big(\mathbb{R}_m\big)^n$$
is holomorphic for every $I\in \mathbb{S}_m.$
\end{prop}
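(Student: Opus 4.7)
The plan is to reduce both implications to comparing the Cauchy--Riemann equations of $F$ on $D \subset \mathbb{C}^n$ with those of $f_I$ on $D_I \subset \mathbb{C}_I^n$. Writing $F = F_1 + iF_2$ for its even--odd pair, so that $f_I(\alpha + I\beta) = F_1(\alpha + i\beta) + I F_2(\alpha + i\beta)$, I will view the target $(\mathbb{R}_m)^n$ as a complex vector space with complex structure given by left multiplication by $I$; then the natural holomorphicity condition for $f_I$ on $D_I$ is
\begin{equation*}
\frac{\partial f_I}{\partial \alpha_t} + I\,\frac{\partial f_I}{\partial \beta_t} = 0, \qquad t = 1,\dots,n.
\end{equation*}
Using $I^2 = -1$ and differentiating $f_I = F_1 + IF_2$ componentwise yields
\begin{equation*}
\frac{\partial f_I}{\partial \alpha_t} + I\,\frac{\partial f_I}{\partial \beta_t} = U_t + I\,V_t,
\end{equation*}
where $U_t := \partial F_1/\partial \alpha_t - \partial F_2/\partial \beta_t$ and $V_t := \partial F_2/\partial \alpha_t + \partial F_1/\partial \beta_t$ depend only on $(\alpha,\beta)$, not on $I$.

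For the forward direction, slice regularity of $f$ means $\partial F/\partial \bar z_t = 0$, equivalently $U_t = V_t = 0$ for every $t$, so $U_t + I V_t = 0$ for every $I \in \mathbb{S}_m$. Hence $f_I$ is holomorphic on $D_I$ for each such $I$.

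For the converse, assume $f_I$ is holomorphic on $D_I$ for every $I \in \mathbb{S}_m$, so that $U_t + I V_t = 0$ for all $I$. Choosing two distinct units $I \neq J$ and subtracting gives $(I-J)\,V_t = 0$. The same invertibility fact that justifies the expression $(J-K)^{-1}$ in Proposition \ref{prop-1}, namely that for distinct $I, J \in \mathbb{S}_m$ the difference $I-J$ is invertible in $\mathbb{R}_m$, then forces $V_t = 0$, and consequently $U_t = 0$. These are exactly the Cauchy--Riemann equations for $F$, so $F$ is holomorphic on $D$ and $f$ is slice regular on $\Omega_D$. I expect the main obstacle to be pinning down the correct interpretation of ``$f_I$ is holomorphic'' given that the codomain is the full $(\mathbb{R}_m)^n$ rather than $\mathbb{C}_I^n$, together with invoking the invertibility of $I-J$; once these are in hand the proof is a direct matching of partial-derivative identities.
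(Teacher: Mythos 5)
Your proof is correct and follows essentially the paper's approach: both compute $\partial f_I/\partial\alpha_t + I\,\partial f_I/\partial\beta_t$ from $f_I = F_1 + IF_2$ and match it against the Cauchy--Riemann system for $F$, treating the codomain as a complex vector space via left multiplication by $I$. The only minor difference is in the converse, where the paper simply invokes ``the arbitrariness of $I$'' while you spell it out via invertibility of $I-J$; taking $J=-I$ would let you add and subtract the two equations $U_t \pm I V_t = 0$ and avoid that invocation entirely.
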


\begin {proof}
Notice that $$f_I(\alpha+I\beta )=F_1(\alpha+i\beta)+IF_2(\alpha+i\beta).$$ If $F$ is holomorphic then
$$\frac{\partial f_I}{\partial \alpha}+I \frac{\partial f_I}{\partial \beta}=\frac{\partial F_1}{\partial \alpha}+I \frac{\partial F_2}{\partial \alpha}+I(\frac{\partial F_1}{\partial \beta}+I \frac{\partial F_2}{\partial \beta})=0$$
at every point $x=\alpha+I\beta \in D_I.$

Conversely, assume that $f_I$ is holomorphic at every $I\in \mathbb{S}_m.$ Then
$$0=\frac{\partial f_I}{\partial \alpha}+I \frac{\partial f_I}{\partial \beta}=\frac{\partial F_1}{\partial \alpha}-\frac{\partial F_2}{\partial \beta}+I( \frac{\partial F_2}{\partial \alpha}+\frac{\partial F_1}{\partial \beta} )$$
at every point $z=\alpha+i \beta \in D.$ From the arbitrariness of $I$ it follows that $F_1,F_2$ satisfy the Cauchy-Riemann equations.
\end{proof}

\begin{prop}
Let $f\in \mathcal{SR} \big(\Omega_ D,\big(\mathbb{R}_m\big)^n\big).$ For every choice of $I=I_1\in \mathbb{S}_m,$ let $I_2,\cdots,I_m$ be a completion to an orthonormal basis of the algebra $\mathbb{R}_m.$ Then there exists $2^{m-1}$ holomorphic mappings $F_A:D_I\rightarrow \mathbb{C}^n_I, A\in \mathcal{P}(m)$ such that for every $z=\alpha +I\beta \in D_I$ we have
$$f_I(z)=\sum_{A\in \mathcal{P}(m)} F_A(z)I_A.$$
Here $I_{\varnothing}=1.$
\end{prop}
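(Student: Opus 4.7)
The plan is to exhibit the splitting by first establishing that $\mathbb{R}_m$ is a free left module over $\mathbb{C}_I := \mathbb{R} + I\mathbb{R}$ of rank $2^{m-1}$, and then transferring the resulting $\mathbb{C}_I$-basis decomposition componentwise to $f_I$.

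First I would fix $I = I_1$ together with the orthonormal basis completion $I_2, \ldots, I_m$, and let $\mathcal{P}'(m)$ denote the collection of (ordered) subsets of $\{2, \ldots, m\}$, so that $|\mathcal{P}'(m)| = 2^{m-1}$ (with $I_\varnothing := 1$). The key algebraic step is to show that $\{I_B\}_{B \in \mathcal{P}'(m)}$ is a left $\mathbb{C}_I$-basis of $\mathbb{R}_m$: every real basis element $I_A$ (for $A \subseteq \{1,\ldots,m\}$) either satisfies $1 \notin A$, in which case $I_A \in \{I_B\}_{B \in \mathcal{P}'(m)}$, or satisfies $1 \in A$, in which case writing $A = \{1\} \cup B$ with $B \in \mathcal{P}'(m)$ and using that $1$ is the smallest index gives $I_A = I_1 I_B = I \cdot I_B$. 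Combining this with $\mathbb{C}_I = \mathbb{R} \oplus I\mathbb{R}$ and the $\mathbb{R}$-linear independence of $\{I_A\}_A$ yields both spanning and uniqueness, so every $w \in \mathbb{R}_m$ decomposes uniquely as $w = \sum_{B \in \mathcal{P}'(m)} c_B I_B$ with $c_B \in \mathbb{C}_I$.

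Next, applied componentwise to $f_I = (f_I^1, \ldots, f_I^n)$, this produces uniquely determined mappings $F_B^k : D_I \to \mathbb{C}_I$ with $f_I^k(z) = \sum_{B} F_B^k(z) I_B$; assembling into vectors gives $F_B = (F_B^1, \ldots, F_B^n) : D_I \to \mathbb{C}_I^n$ satisfying $f_I(z) = \sum_{B \in \mathcal{P}'(m)} F_B(z) I_B$, matching the stated decomposition (after relabeling $B$ as $A$).

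Finally, for the holomorphy of each $F_B$, I would appeal to Proposition \ref{prop:regular-derivative}, which supplies $\partial f_I/\partial \alpha_t + I\, \partial f_I/\partial \beta_t = 0$ on $D_I$ for each $t = 1, \ldots, n$. Applying this operator to the decomposition, using only associativity to pull the fixed element $I_B$ to the right and the fact that $I$ multiplies the scalar $\mathbb{C}_I$-coefficients on the left, yields
\[
0 \;=\; \sum_{B \in \mathcal{P}'(m)} \left( \frac{\partial F_B^k}{\partial \alpha_t} + I\, \frac{\partial F_B^k}{\partial \beta_t} \right) I_B,
\]
and since each parenthetical coefficient lies in $\mathbb{C}_I$, the uniqueness of the $\mathbb{C}_I$-basis decomposition forces each $\partial_{\alpha_t} F_B^k + I\, \partial_{\beta_t} F_B^k$ to vanish, proving that $F_B$ is holomorphic on $D_I$ in the several-variable sense with respect to the complex structure of $\mathbb{C}_I$. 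The main (and essentially only) obstacle is the algebraic basis claim in the first step; once that is in place, existence and uniqueness of the decomposition, and the transfer of holomorphy, are routine.
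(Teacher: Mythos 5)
The paper states this proposition without proof---it is the several-variable analogue of the classical Splitting Lemma for slice regular functions---so there is nothing to compare against; your argument is correct and is essentially the standard one: first verifying that $\{I_B\}_{B\subseteq\{2,\ldots,m\}}$ is a left $\mathbb{C}_I$-basis of $\mathbb{R}_m$ of cardinality $2^{m-1}$, then decomposing $f_I$ componentwise, and finally transferring the Cauchy--Riemann system $\partial_{\alpha_t}f_I+I\,\partial_{\beta_t}f_I=0$ from Proposition~\ref{prop:regular-derivative} to each $\mathbb{C}_I$-coefficient via associativity and $\mathbb{C}_I$-linear independence. You also correctly read the index set as subsets of $\{2,\ldots,m\}$; the paper's ``$A\in\mathcal{P}(m)$'' together with ``$I_\varnothing=1$'' is a slight abuse of notation, since the stated count $2^{m-1}$ only matches the subsets omitting the index $1$.
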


\section{Growth theorems in the unit ball}

We consider the unit ball in the set of slice several Clifford variables $\big(\mathcal{Q}_m\big)^n_s,$ i.e.
$$\mathbb{B}:=\Big\{ x\in \big(\mathcal{Q}_m\big)^n_s  | \parallel x\parallel =\big(\sum_{t=1}^n|x_t|^2\big)^{\frac{1}{2}}<1, \ x=(x_1,\cdots,x_n)\Big\},$$
then
$$\mathbb{B}_I=\mathbb{B}\cap \mathbb{C}_I^n, \qquad  \forall\  I\in \mathbb{S}_m.$$


\begin{lem}\label{lem-A}
Let $f \in \mathcal{S}\big(\Omega_D,\big(\mathbb{R}_m\big)^n\big),\ f({D_I})\subseteq \mathbb{C}^n_I$ for some $I\in \mathbb{S}_m.$ Then
$$\max\limits_{J\in \mathbb{S}_m}  ||f(\alpha+J\beta)||=\max\limits_{J=\pm I}||f(\alpha+J\beta)||,$$
$$\min\limits_{J\in \mathbb{S}_m}||f((\alpha+J\beta)||=\min\limits_{J=\pm I}||f(\alpha+J\beta)||,$$
for each $\alpha, \beta \in {\mathbb{R}^n},\ J\in \mathbb{S}_m$ with $\alpha+J\beta \in \Omega_D.$

\end{lem}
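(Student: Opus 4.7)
The plan is to express $f(\alpha + J\beta)$ via the stem mapping and reduce the extremal problem over $J \in \mathbb{S}_m$ to an affine function of the single real parameter $[JI]_0$, the Clifford scalar part of $JI$. By the definition of slice mapping, $f(\alpha + J\beta) = F_1(z) + J F_2(z)$ with $z = \alpha + i\beta$. The assumption $f(D_I) \subseteq \mathbb{C}_I^n$, applied at $J = \pm I$ and combined by sum and difference, forces $F_1(z), F_2(z) \in \mathbb{C}_I^n$ for every $z \in D$.

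Next, I compute $\|f(\alpha + J\beta)\|^2$ using the identity $|X|^2 = [X^c X]_0$, where $X^c$ is the Clifford conjugate and $[\,\cdot\,]_0$ the scalar part. Writing componentwise $F_{1,t}(z) = a_t + I b_t$ and $F_{2,t}(z) = c_t + I d_t$ with real scalars $a_t, b_t, c_t, d_t$, and using $J^c = -J$ together with $J^2 = -1$, the cross term simplifies to
$$F_{1,t}^c\, J\, F_{2,t} - F_{2,t}^c\, J\, F_{1,t} = (a_t d_t - b_t c_t)(JI + IJ).$$
Taking the scalar part and summing over $t$ yields
$$\|f(\alpha + J\beta)\|^2 = N + 2 C\, [JI]_0,$$
where $N = \sum_t (|F_{1,t}(z)|^2 + |F_{2,t}(z)|^2)$ and $C = \sum_t (a_t d_t - b_t c_t)$ are independent of $J$.

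Third, I show that $[JI]_0 \in [-1, 1]$ for every $J \in \mathbb{S}_m$, with equality only at $J = \pm I$. By the Cauchy--Schwarz inequality $|[JI]_0| \le |JI|$, and
$$|JI|^2 = [(JI)^c(JI)]_0 = [IJ\cdot JI]_0 = [I J^2 I]_0 = [-I^2]_0 = 1.$$
Equality in Cauchy--Schwarz forces $JI \in \mathbb{R}$, hence $J = \pm I$; direct substitution gives $[JI]_0 = -1$ at $J = I$ and $[JI]_0 = +1$ at $J = -I$. Therefore the affine function $N + 2C\,[JI]_0$ attains its maximum $N + 2|C|$ and minimum $N - 2|C|$ on $\mathbb{S}_m$ precisely at $J = \pm I$, establishing both identities.

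The main technical point is the explicit cross-term computation in Step 2, which collapses to a multiple of $(JI + IJ)$ thanks to $F_1, F_2$ lying in the \emph{same} slice copy $\mathbb{C}_I$; without this hypothesis the affine reduction fails. A secondary subtlety is the uniform bound $|JI|=1$ for all $J, I \in \mathbb{S}_m$, which uses the quadratic-cone identities $J^c = -J$ and $J^2 = -1$ rather than just $J \in \mathbb{R}_m$.
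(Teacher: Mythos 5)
Your proof is correct and takes essentially the same route as the paper: both reduce $\|f(\alpha+J\beta)\|^2$ to an affine function of a single real parameter varying in $[-1,1]$, namely the $I$-component of $J$ in the paper (call it $u$) versus $[JI]_0$ in yours, and these coincide up to sign ($u=-[JI]_0$). Your presentation via $|X|^2=[X^cX]_0$ and the explicit cross-term collapse to a multiple of $JI+IJ$ is a mildly cleaner bookkeeping than the paper's, which factors $\beta_t\alpha_t^{-1}$ and has to split the cases $\alpha_t=0$ and $\alpha_t\neq0$, but the underlying identity is the same.
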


\begin{proof}
Since $f\in \mathcal{S}\big(\Omega_D,\big(\mathbb{R}_m\big)^n\big),$   the representation formula shows that
$$f(x)=\frac{1}{2}(f(z)+f(\bar z))-J\frac{I}{2}(f(z)-f(\bar z))$$
 for each $u,v\in\mathbb R^m$ and $J\in\mathbb S_m$ with $x=u+Jv\in D_J$ and $z=u+Iv$.

Denote
 \begin{eqnarray*}\alpha&=&\frac{1}{2}(f(z)+f(\bar z)), \\
  \beta&=&-\frac{I}{2}(f(z)-f(\bar z)).
  \end{eqnarray*}
By assumption, we have both $\alpha$ and $\beta$ in $\mathbb{C}^n_I$.
We set
 $$\alpha=(\alpha_1,\cdots,\alpha_n), \qquad \beta=(\beta_1,\cdots,\beta_n).$$
For  $\alpha_t\not =0,$ we set $$\beta_t \alpha_t^{-1}=a_t+Ib_t,$$
where $a_t, b_t\in \mathbb{R}$ for any $t\in\{1,\cdots,n\}.$

Take $I_2,\cdots,I_{2^m-1}\in \mathbb{S}_m$ such that $\{1,I,I_2,\cdots,I_{2^m-1}\}$ consists of a basis of $\mathbb{R}_m.$ For any $J\in\mathbb S_m$
we can represent  it under such a basis  as  $$J=uI+\sum_{l=2}^{2^m-1}v_lI_l$$ with  real coefficients  $u,v_2,v_3,\cdots,v_{2^m-1} \in \mathbb{R}$  such that  $$ u^2+\sum_{l=2}^{2^m-1}v_l^2=1.$$
We can rewrite  $$f(x)=\alpha+J\beta,$$
then
\begin {equation*}
\begin {split}
||f(x)||^2=&\sum_{t=1}^n |\alpha_t+J\beta_t|^2
\\=&\sum_{t=1,\alpha_t\not =0}^n|1+J \beta_t \alpha_t^{-1}|^2|\alpha_t|^2+\sum_{t=1,\alpha_t=0}^n|\beta_t|^2
\\=&\sum_{t=1,\alpha_t\not =0}^n|1+(uI+\sum_{l=2}^{2^m-1}v_lI_l) (a_t+Ib_t)|^2|\alpha_t|^2+\sum_{t=1,\alpha_t=0}^n|\beta_t|^2
\\=&\sum_{t=1,\alpha_t\not =0}^n(1+a^2_t+b_t^2-2b_tu)|\alpha_t|^2+\sum_{t=1,\alpha_t=0}^n|\beta_t|^2
\\=&\sum_{t=1,\alpha_t\not =0}^n(1+a^2_t+b_t^2)|\alpha_t|^2+\sum_{t=1,\alpha_t=0}^n|\beta_t|^2-\sum_{t=1}^n 2b_t|\alpha_t|^2u
\\=&:g(u)
\end{split}
\end{equation*}

Therefore $$||f(x)||_{\max}=\max_{u\in[-1,1]}g(u)=\max_{u=\pm 1}g(u),$$
 $$||f(x)||_{\min}=\min_{u\in[-1,1]}g(u)=\min_{u=\pm 1}g(u).$$

\end{proof}

\begin{rem}
In \cite{Wang2017001}, it obtained the similar result to Lemma \ref{lem-A} in the case of $n=1.$ The method above is simpler and satisfies in higher dimensions. Moreover, f does
not need to be a slice monogenic mapping on $\Omega_D^*$(see Remark \ref{rem-S2}), it only need to be a slice mapping on $\Omega_D$.
\end{rem}

\begin{thm}\label{thm-c}
Let $f$ be a mapping in $\mathcal{SR}\big(\mathbb{B},\big(\mathbb{R}_m\big)^n\big)$ such that its restriction $f_I$ to $\mathbb{B}_I$ is a starlike mapping such that $f({\mathbb{B}_I})\subseteq \mathbb{C}_I^n$ for some $I\in \mathbb{S}_m.$ If $f(0)=0, f'(0)=\mathbb{I}_n$ (Identity matrix of order n), then
$$\frac{\parallel x\parallel}{(1+\parallel x\parallel )^2}\leq \parallel f(x)\parallel \leq \frac{\parallel x\parallel}{(1-\parallel x\parallel )^2}, \qquad \forall\ x\in \mathbb{B}.$$
These estimates are sharp.
\end{thm}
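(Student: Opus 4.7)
The strategy is to first establish the two-sided bound on a single slice $\mathbb{B}_I$ by invoking the classical growth theorem for normalized starlike holomorphic mappings on the Euclidean unit ball of $\mathbb{C}^n$, and then to propagate the bound to all of $\mathbb{B}$ using Lemma \ref{lem-A}.

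For the slice step, Proposition \ref{prop:regular-derivative} gives that the restriction $f_I:\mathbb{B}_I\to \big(\mathbb{R}_m\big)^n$ is holomorphic, and by hypothesis its image is contained in $\mathbb{C}_I^n$. Under the canonical $\mathbb{R}$-algebra isomorphism $\mathbb{C}_I\cong\mathbb{C}$ induced by $I\mapsto i$, the mapping $f_I$ therefore identifies with a normalized starlike holomorphic mapping $\mathbb{B}^n\to\mathbb{C}^n$ (the normalization $f(0)=0,\ f'(0)=\mathbb{I}_n$ of Definition \ref{rm-2} descending intact). The classical Barnard--FitzGerald--Gong growth theorem then yields
$$\frac{\|z\|}{(1+\|z\|)^2}\leq \|f_I(z)\|\leq \frac{\|z\|}{(1-\|z\|)^2},\qquad z\in\mathbb{B}_I.$$

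To extend to arbitrary $x\in\mathbb{B}$, I would write $x=\alpha+J\beta$ with $\alpha,\beta\in\mathbb{R}^n$ and $J\in\mathbb{S}_m$. Since $|\alpha_t+J'\beta_t|^2=\alpha_t^2+\beta_t^2$ for every $J'\in\mathbb{S}_m$ and every $t$, the norm $\|\alpha+J'\beta\|$ is independent of $J'$; in particular $\alpha\pm I\beta\in\mathbb{B}_I$ and $\|\alpha\pm I\beta\|=\|x\|$. Applying Lemma \ref{lem-A} with $\alpha,\beta$ held fixed then gives
$$\min_{J'=\pm I}\|f(\alpha+J'\beta)\|\;\leq\; \|f(x)\|\;\leq\; \max_{J'=\pm I}\|f(\alpha+J'\beta)\|,$$
and the slice estimate above bounds the right-hand side by $\|x\|/(1-\|x\|)^2$ and the left-hand side from below by $\|x\|/(1+\|x\|)^2$, proving the theorem. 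Sharpness follows from sharpness in the classical case: extremal examples (Koebe-type mappings in a single coordinate direction, as already recalled for $n=1$ in the introduction) can be lifted to slice regular normalized starlike mappings on $\mathbb{B}$ that saturate each inequality at radial points of $\mathbb{B}_I\cap(\mathbb{C}_I\times\{0\}^{n-1})$.

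The only potentially delicate point is the legitimacy of the slice step: one must check that no information is lost in reducing $f_I$ to a genuine $\mathbb{C}^n$-valued normalized starlike mapping on $\mathbb{B}^n$, so that the Barnard--FitzGerald--Gong theorem is literally applicable. Once the identification $\mathbb{C}_I\cong\mathbb{C}$ is set up and the starlikeness of $f_I$ is read off directly from the hypothesis, no further technical issue arises, and the remainder of the argument is just the combination of Lemma \ref{lem-A} with the $J$-invariance of $\|\alpha+J\beta\|$.
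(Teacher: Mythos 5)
Your proof is correct and follows essentially the same route as the paper: reduce to the slice $\mathbb{B}_I$ via the hypothesis $f(\mathbb{B}_I)\subseteq\mathbb{C}_I^n$, apply the classical growth theorem for normalized starlike mappings there, and then propagate to all of $\mathbb{B}$ using Lemma \ref{lem-A} together with the fact that $\|\alpha+J\beta\|$ is independent of $J\in\mathbb{S}_m$. Your version is a bit more explicit about the identification $\mathbb{C}_I\cong\mathbb{C}$ and the norm invariance, but these are the same two ingredients the paper uses.
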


\begin{proof}
Since $f_I:\mathbb{B}_I \subset \mathbb{C}_I^n \rightarrow \mathbb{C}_I^n$ is a starlike mapping such that $f_I(0)=0, f'_I(0)=\mathbb{I}_n,$
we have \cite{Graham2003001}
$$\frac{\parallel z\parallel}{(1+\parallel z\parallel )^2}\leq \parallel f_I(z)\parallel \leq \frac{\parallel z\parallel}{(1-\parallel z\parallel )^2}\qquad \forall\ z\in \mathbb{B}_I.$$
With $z$ replaced by $\bar z$, then
$$\frac{\parallel z\parallel}{(1+\parallel z\parallel )^2}\leq \parallel f_I(\overline z)\parallel \leq \frac{\parallel z\parallel}{(1-\parallel z\parallel )^2}\qquad \forall\ \overline z\in \mathbb{B}_I.$$
These estimates are sharp.

For any $x\in \mathbb B$, there exists $z=\alpha+I\beta \in \mathbb{B}_I$ and $ J\in \mathbb{S}_m$  such that  $x=\alpha+J\beta \in \mathbb{B}$��
Lemma \ref{lem-A} concludes  that
$$||f(x)||\geq \min\{||f_I(z)||,||f_I(\overline{z})||\}\geq \frac{\parallel z\parallel}{(1+\parallel z\parallel )^2}
=\frac{\parallel x\parallel}{(1+\parallel x\parallel )^2}.$$
This estimate is sharp.

The reverse  inequality can be proved similarly.
\end{proof}


\begin{exa} We consider the function
Let $f:\mathbb{B}\rightarrow (\mathbb{R}_m\big)^n$, defined by
$$f(x)=\big(x_1(1-x_1e^{I\theta})^{-*2},\cdots,x_n(1-x_ne^{I\theta})^{-*2}\big).$$
%
It is easy to see that $f\in \mathcal{SR}\big(\mathbb{B},\big(\mathbb{R}_m\big)^n\big),\ f(\mathbb{B}_I)\subseteq \mathbb{C}_I^n$   and $f(0)=0,\ f'(0)=I_n.$
Since $f_I$ is a starlike mapping on $\mathbb{B}_I$ (see \cite{Graham2003001}),
we have  $f$ satisfies all the conditions of Theorem \ref{thm-c}.
\end{exa}

For convex mappings, the same approach yields the similar results.

\begin{thm}
Let $f$ be a mapping in $\mathcal{SR}\big(\mathbb{B},\big(\mathbb{R}_m\big)^n\big)$ such that its restriction $f_I$ to $\mathbb{B}_I$ is a convex mapping such that $f({\mathbb{B}_I})\subseteq \mathbb{C}_I^n$ for some $I\in \mathbb{S}_m.$ If $f(0)=0, f'(0)=\mathbb{I}_n$, then
$$\frac{\parallel x\parallel}{1+\parallel x\parallel }\leq \parallel f(x)\parallel \leq \frac{\parallel x\parallel}{1-\parallel x\parallel }, \qquad\forall\  x\in \mathbb{B}.$$
These estimates are sharp.
\end{thm}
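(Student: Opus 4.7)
The plan is to imitate the proof of Theorem \ref{thm-c} verbatim, with the classical starlike growth estimates replaced by the corresponding classical growth estimates for normalized convex mappings on the Euclidean unit ball in $\mathbb{C}^n$. Since $f_I : \mathbb{B}_I \to \mathbb{C}_I^n$ is, after identifying $\mathbb{C}_I^n$ with $\mathbb{C}^n$, a normalized convex biholomorphism of the unit ball, the standard result from geometric function theory in several complex variables (see \cite{Graham2003001}) yields
\[
\frac{\|z\|}{1+\|z\|} \;\le\; \|f_I(z)\| \;\le\; \frac{\|z\|}{1-\|z\|}, \qquad \forall\, z\in \mathbb{B}_I,
\]
and these bounds are known to be sharp (attained on an appropriate one-dimensional slice).

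The second step is to apply this estimate to both $z=\alpha+I\beta$ and $\bar z=\alpha-I\beta$, which lie in $\mathbb{B}_I$ simultaneously and have the same norm. This gives the same two-sided bound for $\|f_I(\bar z)\|$ in terms of $\|z\|=\|\bar z\|$.

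Third, for an arbitrary $x=\alpha+J\beta\in \mathbb{B}$ with $\alpha,\beta\in\mathbb{R}^n$ and $J\in\mathbb{S}_m$, Lemma \ref{lem-A} (which, crucially, is stated for general slice mappings and not only starlike ones) implies
\[
\min\{\|f_I(z)\|,\|f_I(\bar z)\|\} \;\le\; \|f(x)\| \;\le\; \max\{\|f_I(z)\|,\|f_I(\bar z)\|\},
\]
and since $\|x\|=\|z\|$, the two-sided estimate on the slice transfers to $\mathbb{B}$, yielding
\[
\frac{\|x\|}{1+\|x\|} \;\le\; \|f(x)\| \;\le\; \frac{\|x\|}{1-\|x\|}.
\]
Sharpness is inherited from the sharpness of the classical bound on $\mathbb{B}_I$, since for a slice point $x\in\mathbb{B}_I$ the estimate reduces to the classical one.

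There is essentially no obstacle beyond invoking the two previously established ingredients in the correct order; the only point demanding a little care is observing that Lemma \ref{lem-A} is formulated for the class $\mathcal{S}(\Omega_D,(\mathbb{R}_m)^n)$, so it applies to $f$ without any starlikeness or convexity hypothesis, and that $f_I$ is truly a convex mapping from $\mathbb{B}_I$ onto a domain in $\mathbb{C}_I^n\cong\mathbb{C}^n$ with the required normalization $f_I(0)=0$, $f'_I(0)=\mathbb{I}_n$ inherited from $f$.
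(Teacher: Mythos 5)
Your proof is correct and matches the paper's intended argument exactly: the paper itself states only that ``the same approach yields the similar results'' after Theorem \ref{thm-c}, which is precisely what you carry out by substituting the classical convex growth bounds for the starlike ones and invoking Lemma \ref{lem-A} to pass from the slice $\mathbb{B}_I$ to all of $\mathbb{B}$.
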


\begin{thm}\label{thm-a}
Let $f\in \mathcal{SR}\big(\mathbb{B},\big(\mathbb{R}_m\big)^n\big)$ such that its restriction $f_I$ to $\mathbb{B}_I$ is a convex mapping such that $f({\mathbb{B}_I})\subseteq \mathbb{C}_I^n$ for some $I\in \mathbb{S}_m.$ If $f(0)=0, f'(0)=\mathbb{I}_n$ ( Identity matrix of order n), then
$$\frac{\parallel x\parallel}{1+\parallel x\parallel }\leq \parallel f(x)\parallel \leq \frac{\parallel x\parallel}{1-\parallel x\parallel }\qquad x\in \mathbb{B}.$$
These estimates are sharp.
\end{thm}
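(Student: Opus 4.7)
The plan is to mirror the proof of Theorem \ref{thm-c}, simply replacing the starlike one-variable growth bounds by the convex ones. First I would invoke the classical growth theorem for normalized biholomorphic convex mappings on the Euclidean unit ball of $\mathbb{C}^n$ (applied via the identification $\mathbb{C}_I^n\cong \mathbb{C}^n$ obtained by fixing the basis $\{1,I\}$) to the restriction $f_I$. This gives
$$\frac{\|z\|}{1+\|z\|}\;\leq\; \|f_I(z)\|\;\leq\; \frac{\|z\|}{1-\|z\|},\qquad z\in\mathbb{B}_I.$$
Since $\mathbb{B}_I$ is invariant under the complex conjugation $z\mapsto\bar z$ on $\mathbb{C}_I^n$, the same two-sided estimate holds with $z$ replaced by $\bar z$.

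Next I would transfer these bounds from $\mathbb{B}_I$ to all of $\mathbb{B}$ by the argument used in Theorem \ref{thm-c}. Given an arbitrary $x=\alpha+J\beta\in\mathbb{B}$ with $\alpha,\beta\in\mathbb{R}^n$ and $J\in\mathbb{S}_m$, set $z=\alpha+I\beta\in\mathbb{C}_I^n$. A direct Clifford computation using $J^c=-J$ shows $|\alpha_t+J\beta_t|^2=\alpha_t^2+\beta_t^2$ componentwise, hence $\|x\|=\|z\|$ and in particular $z\in\mathbb{B}_I$. Applying Lemma \ref{lem-A} to the slice regular mapping $f$ yields
$$\min\bigl\{\|f_I(z)\|,\|f_I(\bar z)\|\bigr\}\;\leq\; \|f(x)\|\;\leq\; \max\bigl\{\|f_I(z)\|,\|f_I(\bar z)\|\bigr\},$$
and combining this with the bounds on $\|f_I(z)\|$ and $\|f_I(\bar z)\|$, together with $\|x\|=\|z\|$, delivers the stated inequalities.

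For sharpness I would exhibit the slice regular extension of the classical extremal convex mapping, for instance the product-type map
$$f(x)=\bigl(x_1(1-x_1 e^{I\theta})^{-*1},\,0,\ldots,0\bigr)$$
(or a minor variant), and verify that equality is achieved along the ray through the first coordinate axis in $\mathbb{B}_I$, where the classical one-variable bound is already sharp.

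The main obstacle, such as it is, lies in the sharpness claim: one must confirm that the chosen extremal candidate lies in $\mathcal{SR}(\mathbb{B},(\mathbb{R}_m)^n)$, that $f_I$ is genuinely convex on $\mathbb{B}_I$, and that the inequality from Lemma \ref{lem-A} is saturated at the test point. Everything else is essentially bookkeeping: the norm identity $\|\alpha+J\beta\|=\|\alpha+I\beta\|$ is an immediate Clifford calculation, Lemma \ref{lem-A} is applied as a black box, and the underlying $\mathbb{C}^n$ growth theorem for convex mappings is standard (e.g.\ \cite{Graham2003001}).
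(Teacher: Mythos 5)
Your main argument follows exactly the route the paper itself prescribes (it states that the convex case follows ``the same approach'' as the starlike Theorem~\ref{thm-c}): invoke the classical growth theorem for normalized biholomorphic convex mappings on the Euclidean ball of $\mathbb{C}^n$ for the slice restriction $f_I$, observe that the same bound holds at $\bar z$, apply Lemma~\ref{lem-A} to bound $\|f(x)\|$ between $\min$ and $\max$ of $\|f_I(z)\|$, $\|f_I(\bar z)\|$, and use $\|\alpha+J\beta\|=\|\alpha+I\beta\|$. All of that is correct and matches the paper's intended proof.

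The sharpness part is the only genuine flaw. The candidate $f(x)=\bigl(x_1(1-x_1 e^{I\theta})^{-*1},\,0,\ldots,0\bigr)$ does not satisfy the hypotheses: its Jacobian at $0$ is $\operatorname{diag}(1,0,\ldots,0)\neq\mathbb{I}_n$, and it is not injective, so $f_I$ is not a normalized convex biholomorphism. The correct extremal for the ball is the slice regular extension of the Cayley-type mapping $z\mapsto z/(1-z_1)$, i.e.\ $f(x)=\bigl((1-x_1)^{-*}*x_1,\ (1-x_1)^{-*}*x_2,\ \ldots,\ (1-x_1)^{-*}*x_n\bigr)$, whose restriction to $\mathbb{B}_I$ is the classical normalized convex map attaining equality along the first coordinate axis. (Note also that a componentwise product $\bigl(x_1(1-x_1e^{I\theta})^{-*1},\ldots,x_n(1-x_ne^{I\theta})^{-*1}\bigr)$---which is essentially what the paper writes as its example---is \emph{not} convex on the Euclidean ball of $\mathbb{C}^n$ when $n\geq 2$; products of disk convex maps give convex maps on the polydisc, not on the ball. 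So neither your candidate nor the paper's example actually furnishes sharpness for the ball; the Cayley-type map does.)
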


\begin{exa}
The function  $f:\mathbb{B}\rightarrow (\mathbb{R}_m\big)^n$, defined by
$$f(x)=\big(x_1(1-x_1e^{I\theta}),\cdots,x_n(1-x_ne^{I\theta})\big), $$
provides an example satisfying all the conditions of Theorem \ref{thm-a}.
\end{exa}

\section{Growth theorems in the starlike or convex domains}

We shall now generalize the results in the last section to more general domains, other than the unit ball.

We consider the  bounded slice domains which are slice starlike and slice circular.

\begin{defn}
A set $\Omega \in \big(\mathcal{Q}_m\big)^n_s$ is called starlike with respect to a fixed point
$\omega_0 \in \Omega$ if the closed line segment joining $\omega_0$ to each
point $\omega\in \Omega$ lies entirely in $\Omega$. Also we say that $\Omega$ is convex if for all $\omega_1,\ \omega_2 \in \Omega,$ the closed line segment between $\omega_1$ and $\omega_2$ lies entirely in $\Omega$. In other words, $\Omega$ is convex if and only if $\Omega$ is starlike with respect to each of its points. The term starlike will mean starlike with respect to zero.
\end{defn}

\begin{defn}
A set $\Omega \in \big(\mathcal{Q}_m\big)^n_s$ is called slice starlike if $\Omega_I$ is starlike in $\mathbb{C}_I^n$ for some $I\in \mathbb{S}_m.$
\end{defn}

\begin{defn}
A domain $\Omega \in \big(\mathcal{Q}_m\big)^n_s$ is called slice circular, if for any $z\in \Omega_I,\ \theta \in \mathbb{R},$ and $I, J\in \mathbb{S}_m,$ we have $e^{J\theta}x \in \Omega$ for any  $x\in [z]\cap {\mathbb{C}^n_J}$.\end{defn}

\begin{rem}
A domain $\Omega$ is slice circular if and only if for some $I\in \mathbb{S}_m,$

$(1)\ \Omega_I$ is circular $(i.e.\ e^{I\theta}z\in \Omega_I$ if  $z\in \Omega_I$ and $\theta \in \mathbb{R})$;

$(2)\ \Omega$ is axially symmetric $(i.e.\ x\in \Omega$ if  $x\in [z]$ and $z\in \Omega_I)$.
\end{rem}

 Obviously, the unit ball $\mathbb{B}$ and polydisc $\mathbb{P}$ are slice circular, where
 $$\mathbb{P}:=\big\{ x\in \big(\mathcal{Q}_m\big)^n_s \big| \  |x_t|<1, \ x=(x_1,\cdots,x_n), \ t=1,2,\cdots,n\big\}.$$

\begin{defn}
A domain $\Omega \in \big(\mathcal{Q}_m\big)^n_s$ is called slice domain, if

$(1)\ \Omega \cap \mathbb{R}^n \not = \emptyset,$

$(2)\ \Omega_I$ is a domain of $\big(\mathcal{Q}_m\big)^n_s \cap \mathbb{C}_I^n,$ for any $I \in \mathbb{S}_m.$
\end{defn}

The bounded slice starlike slice circular and slice domain has an analytic characterization via definition functions.

\begin{lem}\label{B1} An axially symmetric slice domain
$\Omega \subseteq \big(\mathcal{Q}_m\big)^n_s$ is bounded slice starlike and slice circular  if and only if there exists a unique continuous function $$\rho: \big(\mathcal{Q}_m\big)^n_s \rightarrow \mathbb{R},$$ called the defining function of $\Omega,$ such that

$(1)\ \rho(x)\geq 0,\ \forall x\in \big(\mathcal{Q}_m\big)^n_s;\ \rho(x)=0 \Longleftrightarrow x=0$;

$(2)\ \rho(tx)=|t|\rho(x),\  \forall J\in \mathbb{S}_m, \forall x\in \mathbb{C}_J^n,\ t\in \mathbb{C}_J,\  $;

$(3)\ \Omega=\{x\in \big(\mathcal{Q}_m\big)^n_s:\ \rho(x)<1\}.$
\end{lem}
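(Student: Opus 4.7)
The plan is to construct $\rho$ as a slice-adapted Minkowski functional and read off both implications from its homogeneity.

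For the forward direction, I would fix $I\in\mathbb{S}_m$ such that $\Omega_I$ is starlike in $\mathbb{C}_I^n$. By the Remark preceding the lemma, slice circularity together with axial symmetry forces $\Omega_I$ to be circular as well, and by transporting starlikeness and circularity through $[z]$, $\Omega_J$ is both starlike and circular for every $J\in\mathbb{S}_m$. Define
$$\rho_I(z) := \inf\{s>0 : s^{-1}z \in \Omega_I\}, \qquad z\in\mathbb{C}_I^n\setminus\{0\},$$
with $\rho_I(0):=0$. Boundedness of $\Omega_I$ gives $\rho_I>0$ off the origin; openness of $\Omega_I$ at $0$ gives $\rho_I<\infty$ and continuity there; and the standard Minkowski-functional argument produces continuity on all of $\mathbb{C}_I^n$, positive homogeneity, and $\Omega_I = \{\rho_I<1\}$. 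Circularity of $\Omega_I$ upgrades positive homogeneity to the full identity $\rho_I(tz) = |t|\rho_I(z)$ for all $t\in\mathbb{C}_I$, and axial symmetry of $\Omega$ gives $\rho_I(\bar z) = \rho_I(z)$. Extend $\rho$ by axial symmetry: for $x = \alpha + J\beta$ with $\alpha,\beta\in\mathbb{R}^n$, $J\in\mathbb{S}_m$, set $\rho(x) := \rho_I(\alpha+I\beta)$; the identity $\rho_I(\bar z) = \rho_I(z)$ makes this unambiguous with respect to the representation ambiguity $\alpha+J\beta = \alpha + (-J)(-\beta)$. Properties (1) and (3) transfer from $\rho_I$ to $\rho$ directly via axial symmetry of $\Omega$, while (2) in an arbitrary slice $\mathbb{C}_J^n$ follows from (2) in $\mathbb{C}_I^n$ once one notes that the correspondence $\alpha+J\beta \mapsto \alpha+I\beta$ intertwines multiplication by $a+Jb\in\mathbb{C}_J$ with multiplication by $a+Ib\in\mathbb{C}_I$.

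For the converse, given $\rho$ satisfying (1)--(3), continuity together with (1) and (2) yields $\rho(x) \geq c\|x\|$ with $c := \min_{y\in\mathbb{C}_I^n,\,\|y\|=1}\rho(y) > 0$; hence $\Omega = \{\rho<1\}$ is bounded. For real $t\in[0,1]$ and $x\in\Omega\cap\mathbb{C}_I^n$, property (2) gives $\rho(tx) = t\rho(x)<1$, so $\Omega_I$ is starlike. For $x \in \Omega \cap \mathbb{C}_J^n$ and $\theta \in \mathbb{R}$, $\rho(e^{J\theta}x) = |e^{J\theta}|\rho(x) = \rho(x) < 1$, so $e^{J\theta}x \in \Omega$; combined with the axial symmetry of $\Omega$ assumed in the hypothesis, this delivers slice circularity. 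Uniqueness follows from a short boundary argument: for $x\neq 0$ set $s := 1/\rho_1(x)$, so that $\rho_1(sx)=1$ and hence $sx\in\partial\Omega$ by continuity and (3); applying (3) for $\rho_2$ gives $\rho_2(sx)=1$, and then (2) for $\rho_2$ yields $\rho_2(x) = 1/s = \rho_1(x)$.

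I expect the main obstacle to be the careful extension of the single-slice Minkowski functional $\rho_I$ to a well-defined continuous function on the multi-slice union $(\mathcal{Q}_m)^n_s = \bigcup_{J\in\mathbb{S}_m}\mathbb{C}_J^n$. Both the representation ambiguity $x = \alpha\pm J(\mp\beta)$ and the variation of the slice parameter $J$ itself must be reconciled with the definition, which is precisely where axial symmetry of $\Omega$ plays the essential role that would be automatic in the classical single-complex-plane setting. Everything else is a direct translation of the standard Minkowski-functional characterization of bounded starlike circular domains.
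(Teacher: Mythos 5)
Your proof is correct and follows essentially the same Minkowski-functional strategy as the paper: one direction reads the three properties off the gauge $\rho(x)=\inf\{c>0:c^{-1}x\in\Omega\}$, and the other recovers boundedness, starlikeness and slice circularity directly from (1)--(3). The only additions beyond the paper's argument are cosmetic or completions of detail: you are more explicit about extending $\rho_I$ across slices via axial symmetry (the paper defines the gauge globally at once and relies on Remark~\ref{rem-S} to the same effect), you prove boundedness via the compactness bound $\rho\geq c\|\cdot\|$ rather than the paper's ray-contradiction, and you supply a uniqueness argument that the paper asserts but does not write out.
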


\begin{proof}
The proof is similar as the case of several complex variables \cite{Liu1998001}.
If the continuous function $\rho(x)$ satisfies $(1),\ (2),$ and $(3),$ then clearly $\Omega$ is a starlike slice circular domain. Its boundedness is also easy to prove, since if there exists a ray coming from the origin which completely falls in the starlike domain $\Omega$, then for any fixed point $x_0$ in this ray we have from $(3)$
$$\ \rho(tx_0)<1,\quad \forall\  t\in [0, \infty).$$  But we then obtain $\rho(x_0)=0$ in terms of  $(2)$. This contradicts $(1).$

Conversely, if $\Omega$ is a bounded slice starlike and slice circular domain in $\big(\mathbb{R}^{m+1}\big)_s^n,$ then we define
$$\rho(x)=\inf\{c>0: \ \ c^{-1}x\in \Omega\}.$$
Obviously, $\rho(x)$ satisfies  $(2),\ (3),$ and $\rho(x)\geq 0$ for all $x$.  If there exists a point $x_0\not =0,$ such that $\rho(x)=0,$ then it follows from  $(2)$ and $(3)$  that $\Omega$ includes the whole ray which comes from the origin and through the point $x_0,$ hence $\Omega$ is unbounded. Thus $(1)$ holds. Finally we prove that $\rho(x)$ is continuous. Clearly,
$$\{x\in \big(\mathcal{Q}_m\big)^n_s:\ r<\rho(x)<R\ \}=R\Omega\setminus \overline{r\Omega}$$
is an open set in $\big(\mathcal{Q}_m\big)_s^n,$ which implies the continuity of $\rho.$
\end{proof}

\begin{rem}\label{rem-S}
From the proof of  Lemma \ref{B1}, we have that
$$\rho(x)=\rho(z),\qquad \forall z\in [x]\subseteq \Omega,$$
where $\Omega$ is a bounded slice starlike and slice circular and slice domain.
\end{rem}

Now we can state our main result about the growth theorem in bounded starlike slice circular and slice domain.

\begin{thm}
Let $\Omega_D$ be a bounded slice starlike slice circular slice domain in $\big(\mathcal{Q}_m\big)^n_s,$ its defining function $\rho(x)$ is a $C^1$ function on $\Omega_D$ except for a lower dimensional set. If $f\in \mathcal{SR}\big(\Omega_D,\big(\mathbb{R}_m\big)^n\big)$ such that its restriction $f_I$ to $D_I$ is a starlike mapping such that $f(D_I)\subseteq \mathbb{C}_I^n$. If $f(0)=0, f'(0)=\mathbb{I}_n$ ( Identity matrix of order n), then for any $x\in \Omega_D,$
$$\frac{\rho(x)}{(1+\rho(x))^2 }\leq \parallel f(x)\parallel \leq \frac{\rho(x)}{(1-\rho(x))^2}$$
or equivalently,
$$\frac{\parallel x\parallel}{(1+\rho(x))^2 }\leq \parallel f(x)\parallel \leq \frac{\parallel x\parallel}{(1-\rho(x))^2}.$$
These estimates are sharp.
\end{thm}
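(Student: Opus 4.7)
The plan is to mirror the proof of Theorem \ref{thm-c}, replacing the classical growth bound on $\mathbb{B}_I$ by the corresponding classical bound on an arbitrary bounded starlike circular domain in $\mathbb{C}_I^n$, and using Lemma \ref{lem-A} together with Remark \ref{rem-S} to transfer the slice estimate from $D_I$ to the full slice domain $\Omega_D$.

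First I would reduce to a single complex slice. The restriction $f_I: D_I \to \mathbb{C}_I^n$ is a normalized (i.e.\ $f_I(0)=0$, $f_I'(0)=\mathbb{I}_n$) holomorphic starlike mapping on the bounded starlike circular domain $D_I \subset \mathbb{C}_I^n$, whose defining function is precisely $\rho\big|_{D_I}$; this is Lemma \ref{B1} read on a single slice. The classical growth theorem for such mappings \cite{Liu1998001} then yields
\begin{equation*}
\frac{\rho(z)}{(1+\rho(z))^{2}} \leq \|f_I(z)\| \leq \frac{\rho(z)}{(1-\rho(z))^{2}}, \qquad z \in D_I,
\end{equation*}
together with the companion estimate in terms of $\|z\|$. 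Because $D_I$ is invariant under complex conjugation and $\rho(\bar z)=\rho(z)$ by Remark \ref{rem-S} (since $\bar z \in [z]$), both chains of inequalities also hold at $\bar z$.

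Next I would transfer these bounds to an arbitrary point of $\Omega_D$. Given $x \in \Omega_D$, write $x = \alpha + J\beta$ with $\alpha,\beta \in \mathbb{R}^{n}$ and $J \in \mathbb{S}_m$, and set $z = \alpha + I\beta \in D_I$. Then $x \in [z]$, so $\rho(x) = \rho(z)$ by Remark \ref{rem-S}, and $\|x\| = \|z\|$ since $|\alpha_t + J\beta_t|^2 = \alpha_t^2 + \beta_t^2 = |\alpha_t + I\beta_t|^2$. Because $f(D_I) \subseteq \mathbb{C}_I^n$, Lemma \ref{lem-A} gives
\begin{equation*}
\min_{J' = \pm I}\|f(\alpha + J'\beta)\| \leq \|f(x)\| \leq \max_{J' = \pm I}\|f(\alpha + J'\beta)\|,
\end{equation*}
and substituting the slice bounds at $z$ and $\bar z$ produces both chains in the statement.

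Sharpness can be obtained by taking the slice-regular extension of the classical extremal starlike mapping on $D_I$, analogous to the example following Theorem \ref{thm-c}; Lemma \ref{lem-A} forces the extremum over $[z]$ to be attained at $J = \pm I$, where the classical bound is already sharp. The main obstacle I anticipate is not the transfer argument itself, which is essentially formal once Lemma \ref{lem-A} and Remark \ref{rem-S} are available, but the verification that the hypotheses of the classical theorem on $D_I$ are genuinely in force: namely that $D_I$ is a bounded starlike circular domain in $\mathbb{C}_I^n$ with defining function $\rho\big|_{D_I}$ satisfying the $\mathbb{C}_I$-homogeneity $\rho(tz) = |t|\rho(z)$ for $t \in \mathbb{C}_I$, together with the $C^1$ regularity off a lower-dimensional set. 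All of these properties are encoded in Lemma \ref{B1} restricted to a single slice, and it is precisely there that the slice circularity and axial symmetry of $\Omega_D$ are used.
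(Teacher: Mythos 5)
Your proposal follows essentially the same route as the paper's proof: restrict to a single slice $D_I$, apply the classical growth theorem of Liu--Ren on bounded starlike circular domains in $\mathbb{C}_I^n$ to $f_I$ at $z$ and $\bar z$, then use Lemma \ref{lem-A} together with the $[z]$-invariance of $\rho$ (Remark \ref{rem-S}) to transfer the estimate to an arbitrary $x\in\Omega_D$. You are somewhat more explicit than the paper in checking that $\|x\|=\|z\|$ and that $D_I$ is closed under conjugation, but the underlying argument and key ingredients are identical.
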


\begin{proof}
Since $\Omega_D$ be a bounded slice starlike slice circular domain in $\big(\mathcal{Q}_m\big)^n_s,$ $D_I$ is bounded, complex circular in $\mathbb{C}_I^n,$ and is also starlike in $\mathbb{C}_I^n,$ for any $I\in \mathbb{S}_m.$ By the assumption that $\rho(x)$ is a $C^1$ function on $\Omega_D$, we know that $\rho(z)$ is a $C^1$ function on $\mathbb{C}_I^n$. Notice that  $f_I$ is a biholomorphic starlike mapping on $D_I$  and admits the properties that  $f(D_I)\subseteq \mathbb{C}_I^n$ and $f(0)=0, f'(0)=\mathbb{I}_n$. Thanks to the classical results in several complex variables
\cite{Liu1998001},  we have
$$\frac{\rho(z)}{(1+\rho(z))^2 }\leq \parallel f(z)\parallel \leq \frac{\rho(z)}{(1-\rho(z))^2}, \qquad \forall\ z\in D_I,$$
as well as
$$\frac{\parallel z\parallel}{(1+\rho(z))^2 }\leq \parallel f(z)\parallel \leq \frac{\parallel z\parallel}{(1-\rho(z))^2}\qquad \forall\ z\in D_I.$$
By symmetrically, we also have
$$\frac{\rho(\bar z)}{(1+\rho(\bar z))^2 }\leq \parallel f(\bar z)\parallel \leq \frac{\rho(\bar z)}{(1-\rho(\bar z))^2}, \qquad \forall\ z\in D_I.$$
These estimates are sharp.

Suppose $x=\alpha+J\beta \in \Omega_D$ for any $J\in \mathbb{S}_m$ and $z=\alpha+I\beta \in D_I$.
By Lemma \ref{lem-A}, we have
$$||f(x)||\geq \min\{||f_I(z)||,||f_I(\overline{z})||\}\geq \frac{\rho(z)}{(1+\rho(z))^2 }
=\frac{\rho(x)}{(1+\rho(x))^2 }.$$
These estimates are sharp.
The remaining inequality can be proved similarly.
\end{proof}

Likewise, we can establish the growth theorem in bounded convex slice circular domain for convex mappings.

Notice that since the convex domain is a starlike domain, for a bounded convex slice circular and slice domain $\Omega_D,$ there also exists a definition function of the domain $\Omega_D.$

\begin{thm}
Let $\Omega_D$ be a bounded convex slice circular slice domain in $\big(\mathcal{Q}_m\big)^n_s,$ with defining function $\rho(x).$ If $f\in \mathcal{SR}\big(\Omega_D,\big(\mathbb{R}_m\big)^n\big)$ such that its restriction $f_I$ to $D_I$ is a convex mapping such that $f(D_I)\subseteq \mathbb{C}_I^n$. If $f(0)=0, f'(0)=\mathbb{I}_n$  (identity matrix of order n), then for any $x\in \Omega_D,$
$$\frac{\rho(x)}{1+\rho(x)}\leq \parallel f(x)\parallel \leq \frac{\rho(x)}{1-\rho(x)}$$
or equivalently,
$$\frac{\parallel x\parallel}{1+\rho(x)}\leq \parallel f(x)\parallel \leq \frac{\parallel x\parallel}{1-\rho(x)}.$$
These estimates are sharp.
\end{thm}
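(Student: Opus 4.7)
The plan is to mirror the proof strategy of the starlike theorem in the previous section, using the classical growth theorem for convex mappings on bounded convex circular domains in $\mathbb{C}^n$ (available in \cite{Liu1998001}) together with Lemma \ref{lem-A} and Remark \ref{rem-S}.

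First I would fix a slice: since $\Omega_D$ is a slice starlike (in particular slice circular) slice domain, the slice $D_I$ is a bounded convex circular domain in $\mathbb{C}_I^n \cong \mathbb{C}^n$. The restriction $f_I : D_I \to \mathbb{C}_I^n$ is a biholomorphic convex mapping with $f_I(0)=0$ and $f_I'(0)=\mathbb{I}_n$. By the classical several-complex-variables growth theorem for convex mappings on bounded circular starlike domains, this yields
$$\frac{\rho(z)}{1+\rho(z)} \leq \|f_I(z)\| \leq \frac{\rho(z)}{1-\rho(z)}, \qquad \forall\, z\in D_I,$$
and equivalently with $\|z\|$ in the numerators. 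Replacing $z$ by $\bar z$ (which belongs to $D_I$ since $D_I$ is circular, hence symmetric about the real axis) gives the analogous bounds for $\|f_I(\bar z)\|$, with $\rho(\bar z)=\rho(z)$ by Remark \ref{rem-S}.

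Next I would transfer these slicewise bounds to arbitrary points of $\Omega_D$. Given $x=\alpha+J\beta \in \Omega_D$, pick the representative $z=\alpha+I\beta \in D_I$; then $x\in [z]$, and Remark \ref{rem-S} gives $\rho(x)=\rho(z)=\rho(\bar z)$. Applying Lemma \ref{lem-A} to the slice mapping $f$ (which is legitimate since $f(D_I) \subseteq \mathbb{C}_I^n$) yields
$$\min_{J=\pm I}\|f(\alpha+J\beta)\| \leq \|f(x)\| \leq \max_{J=\pm I}\|f(\alpha+J\beta)\|,$$
that is, $\|f(x)\|$ is sandwiched between $\min\{\|f_I(z)\|,\|f_I(\bar z)\|\}$ and $\max\{\|f_I(z)\|,\|f_I(\bar z)\|\}$. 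Combining with the slicewise bounds above and $\rho(x)=\rho(z)$ immediately gives
$$\frac{\rho(x)}{1+\rho(x)} \leq \|f(x)\| \leq \frac{\rho(x)}{1-\rho(x)},$$
and the equivalent form with $\|x\|=\|z\|$ in the numerators follows the same way.

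There is essentially no conceptual obstacle here; the whole argument is a direct parallel of Theorem \ref{thm-c} and its proof, with the ball replaced by an arbitrary bounded convex slice circular slice domain, and with the convex-mapping growth estimate replacing the starlike one. The only place to be slightly careful is making sure the defining function behaves coherently under the slice structure, namely that $\rho$ is invariant on the axially symmetric sets $[z]$; this is exactly the content of Remark \ref{rem-S} and requires no new work. Sharpness can be demonstrated, as in the ball case, by slice regular extensions of extremal convex mappings of the form $f(x) = (x_1(1-x_1 e^{I\theta})^{-1}, \ldots, x_n(1-x_n e^{I\theta})^{-1})$ when $\Omega_D$ is a polydisc, or by the appropriate extremal chosen for the specific domain as in \cite{Liu1998001}.
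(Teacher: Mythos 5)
Your argument is correct and reproduces exactly what the paper intends: the paper omits the proof, stating only that it follows ``likewise'' from the starlike case, and your write-up supplies precisely that parallel argument (classical convex growth estimate on $D_I$ from \cite{Liu1998001}, transfer to $\Omega_D$ via Lemma~\ref{lem-A} and the $[z]$-invariance of $\rho$ from Remark~\ref{rem-S}). One minor slip in wording: you say ``slice starlike (in particular slice circular),'' but slice starlike does not imply slice circular --- both are separate hypotheses on $\Omega_D$; this does not affect the substance of the proof.
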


\section{Final remarks}

In this paper, we have proved the growth theorem for slice monogenic extensions of starlike and convex mappings on the unit ball in the subset of slice several Clifford variables $\big(\mathcal{Q}_m\big)^n_s.$ However,  the corresponding  distortion theorem is still untouched. This deserves further investigation.

\newpage

\nocite{*}
\bibliographystyle{plain}
\bibliography{mybibfile.bib}
\end{document}